\newtheorem{thm}{Theorem}[section]
\newtheorem{prop}[thm]{Proposition}
\newtheorem{lem}[thm]{Lemma}
\newtheorem{claim}[thm]{Claim}
\theoremstyle{definition}
\newtheorem{defn}{Definition}
\setlist[enumerate]{itemsep=2ex, topsep=2ex} %spaces out enumerate/itemize better
\setlist[itemize]{itemsep=2ex, topsep=2ex}
\newcommand{\E}{\mathbb{E}}
\newcommand{\al}{\alpha}
\newcommand{\del}{\delta}
\renewcommand{\l}{\left}
\renewcommand{\r}{\right}
\newcommand{\half}{\frac{1}{2}}
\newcommand{\sm}{\setminus}
\newcommand{\sub}{\subseteq}
\renewcommand{\c}[1]{\mathcal{#1}}
\renewcommand{\b}[1]{\mathbf{#1}}
\newcommand{\tr}[1]{\textrm{#1}}
\newcommand{\rec}[1]{\frac{1}{#1}}
\newcommand{\f}[2]{\frac{#1}{#2}}
\newcommand{\floor}[1]{\l\lfloor #1\r\rfloor}
\newcommand{\mr}[1]{\mathrm{#1}}
\newcommand{\Var}{\mr{Var}}
\newcommand{\cH}{\mathcal{H}}
\title{A Smoother Notion of Spread Hypergraphs}
\author{Sam Spiro\footnote{Dept.\ of Mathematics, UCSD {\tt sspiro@ucsd.edu}. This material is based upon work supported by the National Science Foundation Graduate Research Fellowship under Grant No. DGE-1650112.}}
\date{\today}
\address
{Department of Mathematics \newline \indent
	University of California, San Diego \newline \indent
	La Jolla, CA, 92093-0112, USA}\fi
\begin{document}
	\maketitle
	
\begin{abstract}
	Alweiss, Lovett, Wu, and Zhang introduced $q$-spread hypergraphs in their breakthrough work regarding the sunflower conjecture, and since then $q$-spread hypergraphs have been used to give short proofs of several outstanding problems in probabilistic combinatorics.  A variant of $q$-spread hypergraphs was implicitly used by Kahn, Narayanan, and Park to determine the threshold for when a square of a Hamiltonian cycle appears in the random graph $G_{n,p}$.  In this paper we give a common generalization of the original notion of $q$-spread hypergraphs and the variant used by Kahn et al.
\end{abstract}

\section{Introduction}
This paper concerns hypergraphs, and throughout we allow our hypergraphs to have repeated edges.  If $A$ is a set of vertices of a hypergraph $\cH$, we define the \textit{degree of $A$} to be the number of edges of $\cH$ containing $A$, and we denote this quantity by $d_{\cH}(A)$, or simply by $d(A)$ if $\cH$ is understood.  We say that a hypergraph $\cH$ is \textit{$q$-spread} if it is non-empty and if $d(A)\le q^{|A|}|\cH|$ for all sets of vertices $A$.  A hypergraph is said to be \textit{$r$-bounded} if each of its edges have size at most $r$ and it is \textit{$r$-uniform} if all of its edges have size exactly $r$.

The notion of $q$-spread hypergraphs was introduced by Alweiss, Lovett, Wu, and Zhang \cite{Sunflower} where it was a key ingredient in their groundbreaking work which significantly improved upon the bounds on the largest size of a set system which contain no sunflower.  Their method was refined by Frankston, Kahn, Narayanan, and Park \cite{Fractional} who proved the following.
\begin{thm}[\cite{Fractional}]\label{thm:Fractional}
	There exists an absolute constant $K_0$ such that the following holds. Let $\cH$ be an $r$-bounded $q$-spread hypergraph on $V$.  If $W$ is a set of size $K_0 (\log r) q  |V|$ chosen uniformly at random from $V$, then $W$ contains an edge of $\cH$ with probability tending to 1 as $r$ tends towards infinity.
\end{thm}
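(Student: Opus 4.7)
The plan is to prove this via the iterative sprinkling strategy of Alweiss--Lovett--Wu--Zhang and the refinement given in \cite{Fractional}. I would write $W$ as a disjoint union of $L=\lceil\log_2 r\rceil+O(1)$ random chunks $W_1,\dots,W_L$, each a uniformly random subset of $V$ of size roughly $Cq|V|$ for a large absolute constant $C$; small technicalities (switching between fixed-size and Bernoulli samplings, concentrating $\sum|W_i|$ around its mean) are handled by standard coupling. For $S\in\cH$ define the fragment after round $i$ to be $S^{(i)}:=S\setminus(W_1\cup\cdots\cup W_i)$ and let $m_i:=\min_{S\in\cH}|S^{(i)}|$. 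The goal is to show $m_L=0$ with high probability, i.e., that $W$ contains an edge.

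The main driver is a \textbf{spread lemma} asserting that with high probability the minimum fragment halves at each round: there is an absolute constant $C$ such that, if $\cH$ is $q$-spread with smallest edge of size $b\ge 1$, then for a random $W\subseteq V$ of size $Cq|V|$,
\[
\Pr\!\left[\,\exists\,S\in\cH:\ |S\setminus W|\le b/2\,\right]\ \ge\ 1-\tfrac{1}{2r}.
\]
Applied iteratively to an appropriate residual (link-type) hypergraph built from the surviving fragment, and union-bounded over $L$ rounds, this forces $m_L=0$. Since $|W|\le L\cdot Cq|V|=O((\log r)q|V|)$, choosing $K_0$ a suitable multiple of $C$ gives the claimed bound, and the failure probability tends to $0$ as $r\to\infty$.

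The technical heart, and the step I expect to be the main obstacle, is the spread lemma itself. If the conclusion fails then for every edge $S$ of the current hypergraph there is a ``large unsampled'' subset $T\subseteq S$ with $|T|>b/2$ and $T\cap W=\emptyset$; I would charge each such witness to a canonical encoding consisting of a minimum-cardinality such $T$ together with a fixed extension $S\supseteq T$ in $\cH$. The $q$-spread inequality $d(T)\le q^{|T|}|\cH|$ controls the number of such encodings in terms of $|T|$, while the probability that a fixed $T$ avoids $W$ is at most $(1-Cq)^{|T|}\le e^{-C'q|T|}$ for a constant $C'$ depending on $C$. A careful union bound over all potential encodings --- balancing the $q^{|T|}$ blow-up against the exponential tail $e^{-C'q|T|}$ --- yields failure probability at most $1/(2r)$ once $C$ is large enough.

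The delicate issues I expect are twofold: first, calibrating the encoding so that the sum over encoded witnesses telescopes correctly and no combinatorial factor is lost; second, verifying that the residual hypergraph at each round inherits enough of the spread condition to keep re-applying the lemma. The latter ``hereditary'' point is precisely where the refinement of \cite{Fractional} improves on the original argument in \cite{Sunflower}, and so is the piece I would verify most carefully when writing out the full proof.
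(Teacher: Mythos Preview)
Your high-level plan --- sprinkle $\Theta(\log r)$ chunks of size $Cq|V|$ and halve the uncovered portion each round --- is the right one, and is essentially what both \cite{Fractional} and this paper do.  However, the two places you flag as ``delicate'' are in fact genuine gaps in the proposal as written, and the way you have stated the spread lemma makes them unfixable without changing the lemma.

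Your spread lemma only asserts the \emph{existence} of a single $S\in\cH$ with $|S\setminus W|\le b/2$, and you then propose to iterate by passing to a ``link-type'' residual at the surviving fragment $T=S\setminus W$.  But the link $\cH_T=\{S'\setminus T:S'\in\cH,\ T\subseteq S'\}$ is in general \emph{not} $q$-spread: one only has $d_{\cH_T}(A)=d_{\cH}(A\cup T)\le q^{|A|+|T|}|\cH|$, and there is no matching lower bound $|\cH_T|\ge q^{|T|}|\cH|$, so the spread parameter of the link can degrade arbitrarily.  This is exactly the obstruction that prevents the naive link iteration from working, and your encoding/union-bound sketch for the lemma (bounding an intersection event ``every edge has a large unsampled part'' by a union over encoded witnesses) does not recover it.  The correct lemma --- both in \cite{Fractional} and here as Lemma~\ref{lem:spreadMain} --- is a statement about \emph{pairs}: for all but a $o(1)$-fraction of pairs $(S,W)$ there is some $S'\in\cH$ with $S'\subseteq S\cup W$ and $|S'\setminus W|$ small.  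One then keeps \emph{all} good fragments, forming a new hypergraph $\cH_{i+1}$ with $|\cH_{i+1}|\ge(1-o(1))|\cH_i|$; it is this largeness of $\cH_{i+1}$ inside $\cH$ (Claim~\ref{cl:spread} here) that guarantees the spread condition is inherited, not any property of a single link.

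For comparison, the paper does not re-prove Theorem~\ref{thm:Fractional} from scratch.  It introduces the notion of $(q;r_1,\dots,r_\ell)$-spread, shows (Proposition~\ref{prop:spreadEquiv}(b)) that any $q$-spread $r$-bounded hypergraph is $(4q;r_1,\dots,r_\ell)$-spread for any geometrically decreasing sequence $r_{i+1}\ge r_i/2$, and then applies the general Theorem~\ref{thm:spreadMain} (uniform case) or Theorem~\ref{thm:nonUniform} (general case) with $\ell=\Theta(\log r)$.  Under the hood, the proof of Theorem~\ref{thm:spreadMain} is the ``most pairs are good'' iteration just described.
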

This theorem was used in \cite{Fractional} to prove a number of remarkable results.  In particular it resolved a conjecture of Talagrand, and it also gave a much simpler solution to Shamir's problem, which had originally been solved by Johansson, Kahn, and Vu~\cite{JKV}.  

Kahn, Narayanan, and Park~\cite{Hamiltonian} used a variant of the method from \cite{Fractional} to show that for certain $q$-spread hypergraphs, the conclusion of Theorem~\ref{thm:Fractional} holds for random sets $W$ of size only $C q |V|$.  They used this to determine the threshold for when a square of a Hamiltonian cycle appears in the random graph $G_{n,p}$, which was a long-standing open problem.

In a talk,  Narayanan asked if there was a ``smoother'' definition of spread hypergraphs which interpolated between $q$-spread hypergraphs and hypergraphs like those in \cite{Hamiltonian} where the $\log r$ term of Theorem~\ref{thm:Fractional} can be dropped. The aim of this paper is to provide such a definition. 

\begin{defn}\label{defMain}
	Let $0<q\le 1$ be a real number and $r_1>\cdots >r_\ell$ positive integers. We say that a hypergraph $\cH$ on $V$ is \textit{$(q;r_1,\ldots,r_\ell)$-spread} if $\cH$ is non-empty, $r_1$-bounded, and if for all $A\sub V$ with $d(A)>0$ and $r_{i}\ge|A|\ge r_{i+1}$ for some $1\le i< \ell$, we have for all $j\ge r_{i+1}$ that
	\[M_j(A):=|\{S\in \cH:|A\cap S|\ge j\}|\le q^j |\cH|.\]
	%and for all $A\sub V$ with $d(A)>0$ and $|A|=r_\ell$, we have $d(A)=M_{r_\ell}(A)\le q^{r_\ell}|\cH|$.
\end{defn}

Roughly speaking, this condition says that every set $A$ of $r_i$ vertices intersects few edges of $\cH$ in more than $r_{i+1}$ vertices.   \iffalse Before stating our main result, let us work through a few basic computations concerning $(q;r_1,\ldots,r_\ell)$-spread hypergraphs to get a better feel for their properties.  We first establish a sufficient condition that seems to be useful in practice.
\begin{lem}
	Let $0<q\le 1$ be a real number and $r_1>\cdots >r_\ell$ positive integers.  If $\cH$ is a hypergraph on $V$ which is non-empty, $r_1$-bounded, and is such that for all $A\sub V$ of size $r_i$ and all $j\ge r_{i+1}$ we have
	\[\sum_{B\sub A: |B|=j} d(B)\le q^j|\cH|,\]
	then $\cH$ is $(q;r_1,\ldots,r_\ell)$-spread.
\end{lem}
\begin{proof}
	If $A'$ is a set with $r_i\ge |A|\ge r_{i+1}$ and $j\ge r_{i+1}$, then by selecting any set $A\supseteq A'$ of size $r_i$ we see that
	\[M_j(A')\le M_j(A)\le \sum_{B\sub A,\ |B|=j} d(B)\le q^j|\cH|,\]
	which proves the result. 
\end{proof}\fi 

As a warm-up, we show how this definition relates to the definition of being $q$-spread. %Here and throughout the document we omit floors and ceilings whenever they are not crucial to our analysis.

\begin{prop}\label{prop:spreadEquiv}
	We have the following.
	\begin{itemize}
		\item[(a)] If $\cH$ is $(q;r_1,\ldots,r_\ell,1)$-spread, then it is $q$-spread.  
		\item[(b)] If $\cH$ is $q$-spread and $r_1$-bounded, then it is $(4q;r_1,\ldots,r_\ell)$-spread for any sequence of integers $r_i$ satisfying $r_i>r_{i+1}\ge \half r_i$.
	\end{itemize}
\end{prop}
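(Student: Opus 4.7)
For part (a), the plan is to directly unpack the definitions. Given $A \sub V$, I need $d(A) \le q^{|A|}|\cH|$. The edge cases are handled immediately: if $|A| = 0$ then $d(A) = |\cH| = q^0|\cH|$, and if $|A| > r_1$ then $d(A) = 0$ by $r_1$-boundedness. Otherwise $1 \le |A| \le r_1$, and writing $r_{\ell+1} := 1$ to match the hypothesis sequence $r_1,\ldots,r_\ell,1$, there is some index $i$ satisfying $r_i \ge |A| \ge r_{i+1}$. Applying the $(q;r_1,\ldots,r_\ell,1)$-spread condition to $A$ with $j = |A|$ (which satisfies $j \ge r_{i+1}$) yields $d(A) = M_{|A|}(A) \le q^{|A|}|\cH|$.

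For part (b), the key step is the averaging inequality
\[M_j(A) \le \sum_{B \sub A,\ |B|=j} d(B),\]
which holds because every $S \in \cH$ with $|A \cap S| \ge j$ is counted at least once on the right, namely by any size-$j$ subset $B \sub A \cap S$. Since $\cH$ is $q$-spread, each $d(B) \le q^j|\cH|$, and therefore $M_j(A) \le \binom{|A|}{j} q^j |\cH|$. The doubling hypothesis $r_{i+1} \ge \half r_i$ together with $|A| \le r_i$ and $j \ge r_{i+1}$ forces $|A| \le 2r_{i+1} \le 2j$, and hence $\binom{|A|}{j} \le \binom{2j}{j} \le 4^j$. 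Combining these bounds yields the desired $M_j(A) \le (4q)^j|\cH|$.

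I do not expect either part to pose a real obstacle. The only design choice worth highlighting is that the doubling condition $r_{i+1} \ge \f{r_i}{2}$ is tailored precisely so that the naive union bound $\binom{|A|}{j} q^j |\cH|$ inflates the spreadness parameter by at most a constant factor; any slower rate of decrease of the $r_i$ would either require a sharper counting argument or produce a worse constant than $4$.
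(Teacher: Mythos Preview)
Your proof is correct and follows the same approach as the paper; in part (b) you bound $\binom{|A|}{j}$ by $\binom{2j}{j}\le 4^j$ where the paper more crudely bounds the number of subsets by $2^{|A|}\le 2^{2j}=4^j$, but the arguments are otherwise identical. One tiny technicality in part (a): Definition~\ref{defMain} only asserts $M_j(A)\le q^j|\cH|$ for sets $A$ with $d(A)>0$, so you should note (as the paper does) that the case $d(A)=0$ with $1\le |A|\le r_1$ is trivial before invoking the spread condition.
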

\begin{proof}
	For (a), assume $\cH$ is $(q;r_1,\ldots,r_\ell,1)$-spread and let $r_{\ell+1}=1$.  Let $A$ be a set of vertices of $\cH$.  If $A=\emptyset$, then $d(A)=|\cH|=q^{|A|}|\cH|$, so we can assume $A$ is non-empty.  If $d(A)=0$, then trivially $d(A)\le q^{|A|}|\cH|$, so we can assume $d(A)>0$.  This means $|A|\le r_1$ since in particular $\cH$ is $r_1$ bounded.   Thus there exists an integer $1\le i\le \ell$ such that  $r_i\ge |A|\ge r_{i+1}$, so the hypothesis that $\cH$ is $(q;r_1,\ldots,r_\ell,1)$-spread and $d(A)>0$ implies
	\[d(A)\le M_{|A|}(A)\le q^{|A|}|\cH|,\]
	proving that $\cH$ is $q$-spread.
	
	For (b), assume $\cH$ is $q$-spread and $r_1$-bounded. If $A$ is any set of vertices of $\cH$, then for all $j\ge \half |A|$ we have
	\[M_j(A)\le \sum_{B\sub A:|B|=j} d(B)\le 2^{|A|}\cdot  q^j |\cH|\le (4q)^j|\cH|.\]
	In particular, if $r_i\ge |A|\ge r_{i+1}$, then this bound holds for any $j\ge r_{i+1}$ since $r_{i+1}\ge \half r_i\ge \half |A|$. We conclude that $\cH$ is $(4q;r_1,\ldots,r_\ell)$-spread. 
\end{proof}

We now state our main result for uniform hypergraphs, which says that a random set of size $C\ell q |V|$ will contain an edge of an $r_1$-uniform $(q;r_1,\ldots,r_\ell,1)$-spread hypergraph with high probability as $C\ell$ tends towards infinity.    An analogous result can be proven for non-uniform hypergraphs, but for ease of presentation we defer this result to Section~\ref{sec:con}.
\begin{thm}\label{thm:spreadMain}
	There exists an absolute constant $K_0$ such that the following holds. Let $\cH$ be an $r_1$-uniform $(q;r_1,\ldots,r_\ell,1)$-spread hypergraph on $V$.  If $W$ is a set of size $C\ell q|V|$ chosen uniformly at random from $V$ with $C\ge K_0$, then
	\[\Pr[W\tr{ contains an edge of }\cH]\ge 1-\f{K_0}{C\ell}.\]
\end{thm}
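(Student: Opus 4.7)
My plan is to adapt the iterative refinement argument from Frankston, Kahn, Narayanan, and Park \cite{Fractional} (which proves Theorem~\ref{thm:Fractional}), modifying it to exploit the multi-level spread condition so as to require only $\ell$ stages instead of $\log r_1$. The driving observation is that Definition~\ref{defMain} directly supplies the tail bound $M_j(A) \le q^j |\cH|$ on each size range $[r_{i+1}, r_i]$, whereas pure $q$-spreadness yields only $M_j(A) \le 2^{|A|} q^j |\cH|$ by summing $d(B) \le q^{|B|}|\cH|$ over subsets $B \sub A$ of size $j$; it is precisely this $2^{|A|}$ factor that costs the $\log r_1$ in Theorem~\ref{thm:Fractional}. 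With the stronger tail bound in hand, one can match exactly one stage per level of the hierarchy $r_1 > \cdots > r_\ell > 1$, giving a total random budget of $\ell \cdot Cq|V|$ as claimed.

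\textbf{Coupling and iteration.} First I would replace the uniform random $W$ by an independent-sampling analogue $W^\ast = W_1 \cup \cdots \cup W_\ell$, where each $W_i$ is obtained by independently including each vertex of $V$ with probability $Cq$; by a standard coupling this only loses a constant factor. I then sample $S_0 \in \cH$ uniformly and iteratively construct edges $S_1, \ldots, S_\ell \in \cH$, each $S_i$ a function of $(S_0, W_1, \ldots, W_i)$, with the aim of propagating the invariant $|S_i \setminus (W_1 \cup \cdots \cup W_i)| \le r_{i+1}$. At stage $i$ the natural construction is to sample $W_i$, form the shrunken uncovered set $U_i' := (S_{i-1} \setminus (W_1 \cup \cdots \cup W_{i-1})) \setminus W_i$, and then choose $S_i$ to be an appropriately distributed edge of $\{S \in \cH : S \supseteq U_i'\}$ (nonempty since $S_{i-1}$ lies in it) so as to minimize the new uncovered part. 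After $\ell$ stages the invariant gives $|S_\ell \setminus W^\ast| \le r_{\ell+1} = 1$, and a short cleanup step using the level-$\ell$ singleton bound $M_1(A) \le q|\cH|$ disposes of the last vertex.

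\textbf{Stage lemma and main obstacle.} The stage lemma I need says that, conditional on the state through stage $i-1$, the invariant fails at stage $i$ with probability $O(1/(C\ell^2))$, so that summing over $\ell$ stages yields total failure $O(1/(C\ell))$ as required. Its proof would use the level-$i$ spread bound $M_j(A) \le q^j |\cH|$ (for $|A|$ in the range $[r_{i+1}, r_i]$ and $j \ge r_{i+1}$) applied to a suitable extension of $U_i'$, combined with an encoding/union-bound argument over refinements $S \supseteq U_i'$ whose uncovered part relative to $W_1 \cup \cdots \cup W_i$ is too large. The main obstacle will be extracting a sharp enough per-stage bound from this setup: one must balance the geometric savings $q^j$ as $j$ ranges over $j \ge r_{i+1}$ against the ``typical'' $W_i$-concentration of the sets involved, and one must carefully verify at each stage that the sets $U_i'$ and the candidate refinements $S_i$ really do fall in the size range where the correct level of the spread hierarchy applies. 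The bookkeeping here is the place where the new definition most needs to do work beyond what is already implicit in \cite{Fractional}.
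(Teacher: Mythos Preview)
Your iterative framework---one refinement stage per level of the hierarchy $r_1>\cdots>r_\ell>1$---is exactly the paper's architecture, and your diagnosis that the multi-level spread bound $M_j(A)\le q^j|\cH|$ is what kills the $2^{|A|}$ factor (and hence the $\log r_1$) is correct.  However, two concrete pieces of your plan would not go through as stated.

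\textbf{The per-stage bound.}  You claim each stage fails with probability $O(1/(C\ell^2))$, so that summing over $\ell$ stages gives $O(1/(C\ell))$.  But the spread condition at level $i$ only supplies savings exponential in $r_{i+1}$: what one actually gets from the stage lemma is a failure probability of order $\ell\cdot(C/4)^{-r_{i+1}/2}$ (the paper proves this as Lemma~\ref{lem:spreadMain} via a ``pathological/non-pathological'' split of the pairs $(S,W)$, borrowed from Kahn--Narayanan--Park~\cite{Hamiltonian}, rather than an encoding argument in the style of~\cite{Fractional}).  Summed over $\ell$ stages this yields $\ell^2(C/4)^{-r_\ell/2}$, which is \emph{not} $O(1/(C\ell))$ when $r_\ell$ is bounded.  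The paper handles this by a case analysis: if $r_\ell\ge K'\log(\ell+1)$ the bound is already fine; if not, one either inserts an artificial level at height $\approx K'\log(\ell+1)$ and stops the iteration there, or (when $r_1$ itself is $O(1)$) abandons the iteration entirely and applies the crude bound of Lemma~\ref{lem:small}.  Your proposal is missing this case split, and without it the $K_0/(C\ell)$ error term is not attainable.

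\textbf{The endgame.}  Your ``short cleanup step using $M_1(A)\le q|\cH|$'' is too optimistic.  After $\ell-1$ stages one has an $r_\ell$-uniform $(2q;r_\ell,1)$-spread hypergraph, and absorbing the remaining $r_\ell$ uncovered vertices into the final random set $W'$ of size $\Theta(\ell q|V|)$ is not a one-vertex problem.  The paper handles it by a genuinely different argument: a second-moment computation (Lemma~\ref{lem:uniform}) showing directly via Chebyshev that $W'$ contains an edge with failure probability $O(q/|W'|\cdot|V|^{-1})=O(1/(C\ell))$.  This Chebyshev step is where the $(q;r_\ell,1)$-spread condition is finally cashed in, and it is not the same mechanism as the iterative stage lemma.
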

We note that Theorem~\ref{thm:spreadMain} with $\ell=\Theta(\log r)$ together with Proposition~\ref{prop:spreadEquiv}(b) implies Theorem~\ref{thm:Fractional} for uniform $\cH$. In \cite{Hamiltonian}, it is implicitly proven that the hypergraph $\c{H}$ encoding squares of Hamiltonian cycles is a $(2n)$-uniform $(Cn^{-1/2};2n,C_0n^{1/2},1)$-spread hypergraph for some appropriate constants $C,C_0$, so the $\ell=2$ case of Theorem~\ref{thm:spreadMain} suffices to prove the main result of \cite{Hamiltonian}. Thus, at least in the uniform case, Theorem~\ref{thm:spreadMain} provides an interpolation between the results of \cite{Fractional,Hamiltonian}. Theorem~\ref{thm:spreadMain} can also be used to recover results from very recent work of Espuny D\'iaz and Person~\cite{DP} who extended the results of \cite{Hamiltonian} to other spanning subgraphs\footnote{Somewhat more precisely, let $\c{H}$ be the hypergraph whose hyperedges consist of copies of $F$ in $K_n$.  If $F$ has $r$ edges and maximum degree $d$, and if $\c{H}$ is $(q,\al,\del)$-superspread as defined in \cite{DP}, then one can show that $\c{H}$ is $(Cq;r,C_1r^{1-\al},C_2r^{1-2\al},\ldots,C_{\floor{1/\al}}r^{1-\floor{1/\al}\al},1)$-spread for some constants $C,C_i$ which depend on $d,\del$.  Indeed, when verifying Definition~\ref{defMain} for $j\ge \del k$,  one can use a similar argument as in Proposition~\ref{prop:spreadEquiv}(b) and the fact that $\c{H}$ is $q$-spread.  If $j<\del k$, then the superspread condition together with Lemma 2.3 of \cite{DP} can be used to give the result.}  of $G_{n,p}$.

\section{Proof of Theorem~\ref{thm:spreadMain}}
Our approach borrows heavily from Kahn, Narayanan, and Park~\cite{Hamiltonian}.  We break our proof into three parts: the main reduction lemma, auxiliary lemmas to deal with some special cases, and a final subsection proving the theorem.

\subsection{The Main Lemma}
We briefly sketch our approach for proving Theorem~\ref{thm:spreadMain}.  Let $\cH$ be a hypergraph with vertex set $V$.  We first choose a random set $W_1\sub V$ of size roughly $q|V|$.  If $W_1$ contains an edge of $\cH$ then we would be done, but most likely we will need to try and add in an additional random set $W_2$ of size $q|V|$ and repeat the process.  In total then we are interested in finding the smallest $I$ such that $W_1\cup \cdots \cup W_I$ contains an edge of $\cH$ with relatively high probability.  One way to guarantee that $I$ is small would be if we had $|S\sm W_1|$ small for most $S\in \cH$ (i.e., most vertices of most edges $S\in \cH$ are covered by $W_1$), and then that $W_2$ covered most of the vertices of most $S\sm W_1$, and so on.  

The condition that, say,  $|S\sm W_1|$ is small for most $S\in \cH$ turns out to be too strong a condition to impose.  However, if $\cH$ is sufficiently spread, then we can guarantee a weaker result: for most $S\in \cH$, there is an $S'\sub S\cup W_1$ such that $|S'\sm W_1|$ is small.  We can then discard $S$ and focus only on $S'$, and by iterating this repeatedly we obtain the desired result.

To be more precise, given a hypergraph $\cH$, we say that a pair of sets $(S,W)$ is \textit{$k$-good} if there exists $S'\in \cH$ such that $S'\sub S\cup W$ and $|S'\sm W|\le k$, and we say that the pair is \textit{$k$-bad} otherwise.   The next lemma shows that $(q;r,k)$-spread hypergraphs have few $k$-bad pairs with $S\in \cH$ and $W$ a set of size roughly $q|V|$.  In the lemma statement we adopt the notation that ${V\choose m}$ is the set of subsets of $V$ of size $m$.

\begin{lem}\label{lem:spreadMain}
	Let $\cH$ be an $r$-uniform $n$-vertex hypergraph on $V$ which is $(q;r,k)$-spread.  Let $C\ge 4$ and define $p=Cq$.  If $pn\ge 2r$ and $p\le \half$, then 
	
	\begin{equation*}\l|\l\{(S,W): S\in \cH,\ W\in {V\choose pn},\ (S,W)\tr{ is }k\tr{-bad}\r\}\r|\le 3(C/2)^{-k/2}|\cH| {n\choose pn}.\end{equation*}
\end{lem}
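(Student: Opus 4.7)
The plan is to prove the lemma via a switching / encoding argument in the spirit of \cite{Hamiltonian,Fractional}.

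The first observation is structural: if $(S, W)$ is $k$-bad, taking $S' = S$ in the goodness condition forces $|S \setminus W| \ge k + 1$. Writing $Z := S \setminus W$, every $S' \in \cH$ with $S' \subseteq S \cup W$ satisfies $S' \setminus W \subseteq S' \cap S \subseteq S$, so $S' \setminus W = S' \cap Z$, and the bad hypothesis then forces $|S' \cap Z| \ge k + 1$ for all such $S'$. Since $k + 1 \le |Z| \le r$, the $(q;r,k)$-spread condition applies to $Z$ and yields $M_j(Z) \le q^j|\cH|$ for every $j \ge k$; in particular $d(Z) \le q^{|Z|}|\cH|$ and at most $q^{k+1}|\cH|$ edges $S' \in \cH$ satisfy $|S' \cap Z| \ge k + 1$.

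For the main step, I would set up a switching map. For each bad pair, pick the lex-first $(k+1)$-subset $T \subseteq Z$; since $|W \setminus S| \ge pn - r \ge pn/2$ by the hypothesis $pn \ge 2r$, for each $(k+1)$-subset $X \subseteq W \setminus S$ we can form the swap $W^\flat := (W \setminus X) \cup T$, a $pn$-set containing $T \subseteq S$. The map $(S,W) \mapsto (S, W^\flat, X)$ is injective: $T$ can be recovered from $(S, W^\flat)$ as the lex-first $(k+1)$-subset of $S \cap W^\flat$ lying below $\min(S \setminus W^\flat)$, and then $W = (W^\flat \setminus T) \cup X$. This gives
\[
	|\text{bad pairs}| \cdot \tbinom{pn/2}{k+1} \le \bigl|\{(S, W^\flat, X)\}\bigr|,
\]
and it remains to bound the image.

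Parameterizing the image by $T$ and applying the spread estimate $d(T) \le q^{k+1}|\cH|$ (valid since $|T| = k+1 \ge k$), together with the counts $\binom{n-k-1}{pn-k-1}$ for $W^\flat \supseteq T$ and $\binom{|V \setminus (S \cup W^\flat)|}{k+1} \le \binom{n-pn}{k+1}$ for $X$, one obtains an image bound of the form $q^{k+1}|\cH|\binom{n}{pn}\binom{pn}{k+1}\binom{n-pn}{k+1}$ via the identity $\binom{n-k-1}{pn-k-1}\binom{n}{k+1} = \binom{n}{pn}\binom{pn}{k+1}$. Combined with $\binom{pn}{k+1}/\binom{pn/2}{k+1} \le O(2^{k+1})$, this yields a ratio bound for $\#\text{bad pairs}/(|\cH|\binom{n}{pn})$ of the form $(\text{constant})\cdot q^{k+1}\binom{n-pn}{k+1}\cdot 2^{k+1}$.

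The main obstacle lies in extracting the exponent $k/2$ from this bound: the naive estimate gives $(2pn(1-p)/C)^{k+1}/(k+1)!$, which carries an extraneous $(pn)^{k+1}$ factor from the $X$-range that threatens to overwhelm the spread saving $q^{k+1}$. I expect the resolution to require a case analysis by $|Z|$: for $|Z|$ significantly exceeding $k+1$, the stronger spread bound $d(Z) \le q^{|Z|}|\cH|$ absorbs the excess; for $|Z|$ close to $k+1$, one needs to exploit directly that the "reachable" set $\mathcal{F}_W = \{S' \in \cH : S' \setminus S \subseteq W\}$ has size at most $q^{k+1}|\cH|$ for bad pairs, together with the stringent constraint that this atypical event is rare over random $W$. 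Verifying the final constant $3$ will then require careful bookkeeping using the hypotheses $C \ge 4$ and $p \le 1/2$.
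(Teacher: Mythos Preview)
Your structural observation is correct and useful: if $(S,W)$ is $k$-bad and $Z=S\setminus W$, then every $S'\in\cH$ with $S'\subseteq S\cup W$ has $|S'\cap S|\ge |S'\cap Z|\ge k+1$. The paper uses exactly this fact. The rest of the proposal, however, does not go through.

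The switching map is not injective as stated. From $(S,W^\flat)$ you can read off $S\cap W^\flat=(S\cap W)\cup T$ and $S\setminus W^\flat=Z\setminus T$; your recovery rule picks the lex-smallest $(k{+}1)$-subset of $S\cap W^\flat$ lying below $\min(Z\setminus T)$. But elements of the original $S\cap W$ may also lie below $\min(Z\setminus T)$, so this subset need not equal $T$, and without $T$ you cannot rebuild $W=(W^\flat\setminus T)\cup X$. This is fixable (e.g.\ record $T$ instead of relying on lex order), but the more serious problem is the one you flag yourself: even granting injectivity, the image bound carries a factor $\binom{n-pn}{k+1}$ from the choice of $X$, so the ratio you obtain is of order $(2qn)^{k+1}/(k+1)!$. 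The hypotheses give no upper bound on $qn$ (only $qn\ge 2r/C$), so this is useless. Your proposed case split on $|Z|$ does not repair it: for $|Z|=k+1$ the bound $d(Z)\le q^{k+1}|\cH|$ is what you already used, and for larger $|Z|$ one has to sum $d(Z)$ over all $Z\subseteq S$, reintroducing factors $\binom{r}{|Z|}$ rather than a saving. The final paragraph is a hope, not an argument.

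The paper's argument is genuinely different. It partitions bad pairs by $t=|S\cap W|$, sets $w=pn-t$, and calls a set $Z=S\cup W$ \emph{pathological} if it contains more than $N:=(C/2)^{-k/2}|\cH|\binom{w+r}{r}/\binom{n}{r}$ edges $S$ with $(S,Z\setminus S)$ bad. Non-pathological pairs are counted by choosing $Z$, then $S\subseteq Z$ (at most $N$ ways by definition), then $S\cap W$. For pathological pairs one fixes $S$ and uses Markov's inequality on a uniformly random $W'\in\binom{V\setminus S}{w}$: if $(S,W')$ is bad and pathological, then by your own structural observation the count $\mathcal{S}(W')=|\{S'\in\cH:S'\subseteq S\cup W',\ |S'\cap S|\ge k\}|$ is at least $N$; the spread hypothesis bounds $\E[\mathcal{S}(W')]$ by $2(C/2)^{-k}|\cH|\binom{w+r}{r}/\binom{n}{r}$, so $\Pr[\mathcal{S}(W')\ge N]\le 2(C/2)^{-k/2}$. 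The threshold $N$ is tuned so the two counts match, and the stray polynomial factors never appear because the spread bound enters only through the geometric sum $\sum_{j\ge k}q^j(w/(n-r))^{-j}$, which is controlled precisely by $w\ge Cqn/2$.
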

\begin{proof}
	Throughout this lemma we make frequent use of the identity
	\[{a-c\choose b-c}/{a\choose b}={b\choose c}/{a\choose c},\]
	which follows from the simple combinatorial identity ${a\choose b}{b\choose c}={a\choose c}{a-c\choose b-c}$.

	For $t\le r$, define \begin{align*} \c{B}_{t}=\{(S,W): S\in \cH,\ W\in {V\choose pn},\ (S,W)\tr{ is }&k\tr{-bad},\ |S\cap W|=t\}.\end{align*}
	Observe that the quantity we wish to bound is $\sum_{t} |\c{B}_{t}|$, so it suffices to bound each term of this sum.  From now on we fix some $t$ and define
	\[w=pn-t.\]
	
	At this point we need to count the number of elements in $\c{B}_t$, and there are several natural approaches that could be used.  One way would be to first pick any $S\in \cH$ and then count how many $W$ satisfy $(S,W)\in \c{B}_{t}$.  Another approach would be to pick any set $Z$ of size $r+w$ (which will be the size of $S\cup W$ since $|S\cap W|=t$) and then bound how many $S,W\sub Z$ have $(S,W)\in \c{B}_{t}$.  For some pairs the first approach is more efficient, and for others the second is.  In particular, the second approach will be more effective whenever $Z=S\cup W$ contains few elements of $\c{B}_{t}$.

	With this in mind, we say that a set $Z$ is \textit{pathological} if
	\[|\{S\in \cH: S\sub Z,\ (S,Z\sm S)\tr{ is }k\tr{-bad}\}|>N,\]
	where \[N:=(C/2)^{-k/2}|\cH|{n-r\choose w}/{n\choose w+r}=(C/2)^{-k/2}|\cH|{w+r\choose r}/{n\choose r}.\]
	We say that a pair $(S,W)$ is \textit{pathological} if the set $S\cup W$ is pathological and that $(S,W)$ is \textit{non-pathological} otherwise.
	
	\begin{claim}
		The number of $(S,W)\in \c{B}_{t}$ which are non-pathological is at most
		\begin{equation*}{n\choose r+w} N {r\choose t}=(C/2)^{-k/2}|\cH|{r\choose t}{n-r\choose w}.\label{eq:nonpathological}\end{equation*}
	\end{claim}
	\begin{proof}
		We identify each of the non-pathological pairs $(S,W)$ by specifying $S\cup W$, then $S$, then $S\cap W$.  
		
		Observe that $S\cup W$ is a non-pathological set of size $r+w$, and in particular there are at most ${n\choose r+w}$ ways to make this first choice.  Fix such a non-pathological set $Z$ of size $r+w$.  Observe that if $(S,W)$ is $k$-bad with $S\cup W= Z$, then $(S,Z\sm S)$ is also $k$-bad.  Because $Z$ is non-pathological, there are at most $N$ choices for $S$ such that $(S,Z\sm S)$ is $k$-bad.  Given $S$, there are at most ${r\choose t}$ choices for $S\cap W$.  Multiplying the number of choices at each step gives the stated result.
	\end{proof}
	%As an aside, the sunflower proof used the above approach without using the notion of pathological sets.  Instead they bounded the number of choices for S by roughly noting that any subset of S\cup W can't contain many edges because we're spread.
	\begin{claim}
		The number of $(S,W)\in \c{B}_{t}$ which are pathological is at most
		\[2(C/2)^{-k/2} |\cH|{r\choose t}{n-r\choose w}\]
	\end{claim}
	%We note that a much shorter version of this part of the argument can implicitly be found in \SP{Hamiltonian Squares}, but at least for myself I found it useful to explicitly write out all ofs the tiny calculations that they sweep under the rug. 
	\begin{proof}
		We identify these pairs by first specifying $S\in \cH$, then $S\cap W$, then $W\sm S$.  
		
		Note that $S$ and $S\cap W$ can be specified in at most $|\cH|\cdot {r\choose t}$ ways, and from now on we fix such a choice of $S$ and $S\cap W$.  It remains to specify $W\sm S$, which will be some element of ${V\sm S\choose w}$.  Thus it suffices to count the number of $W'\in {V\sm S\choose w}$ such that $(S,W')$ is both $k$-bad and pathological.%, which is equivalent to determining the probability that a randomly chosen $W'$ is both $k$-bad and pathological.
		
		For $W'\in {V\sm S\choose w}$, define \[\c{S}(W')=|\{S'\in \cH:S'\sub  (S\cup W'),\  |S'\cap S|\ge k\}|.\]  Observe that if $(S,W')$ is $k$-bad, then every edge $S'\sub (S\cup W')$ has $|S'\cap S|\ge k$ (since $|S'\cap S|\ge |S'\sm W'|$), so the $W'$ we wish to count satisfy \[\c{S}(W')=|\{S'\in \cH: S'\sub (S\cup W')|.\]  If $(S,W')$ is pathological, then this latter set has size at least $N$.  In total, if $\b{W}'$ is chosen uniformly at random from ${V\sm S\choose w}$, then
		\begin{equation}\Pr[(S,\b{W}')\tr{ is }k\tr{-bad and pathological}]\le \Pr[\c{S}(\b{W}')\ge N]\le \f{\E[\c{S}(\b{W}')]}{N},\label{eq:probW'}\end{equation}
		where this last step used Markov's inequality.  It remains to upper bound $\E[\c{S}(\b{W}')]$.
		
		Let \[m_j(S)=|\{S'\in \cH: |S\cap S'|=j\}|,\] and observe that for any $S'$ with $|S\cap S'|=j$, the number of $W'\in {V\sm S\choose w}$ with $S'\sub S\cup W'$ is exactly ${n-2r+j\choose w-r+j}$.  With this we see that
		\begin{align}\E[\c{S}(\b{W}')]=\sum_{j\ge k} m_j(S) \f{{n-2r+j\choose w-r+j}}{{n-r\choose w}}&=\sum_{j\ge k} m_j(S) \f{{w\choose r-j}}{{n-r\choose r-j}}=\f{{w+r\choose r}}{{n\choose r}}\sum_{j\ge k} m_j(S) \f{{w\choose r-j}}{{n-r\choose r-j}}\cdot \f{{n\choose w+r}}{{n-r\choose w}}.\label{eq:spreadExpectation}\end{align}
		Because $\cH$ is $(q;r,k)$-spread, we have for each $j\ge k$ in the sum that
		\begin{equation}m_j(S)\le M_j(S)\le q^j |\cH|.\label{eq:expectation1}\end{equation}
		For integers $x,y$, define the falling factorial $(x)_y:=x(x-1)\cdots(x-y+1)$.  With this we have
		\begin{align}
			\f{{w\choose r-j}}{{n-r\choose r-j}}\cdot \f{{n\choose w+r}}{{n-r\choose w}}&=\f{(w)_{r-j}}{(n-r)_{r-j}}\cdot \f{(n)_{r}}{(w+r)_r}\le \l(\f{w}{n-r}\r)^{r-j}\cdot \l(\f{n-r}{w}\r)^r=\l(\f{w}{n-r}\r)^{-j}\le (Cq/2)^{-j}, \label{eq:spreadBinomial}
		\end{align}
		where the first inequality used $w\le pn\le \half n\le n-r$, and the second inequality used
		\[w=pn-t\ge pn-r\ge pn/2=Cqn/2.\]
		Combining \eqref{eq:spreadExpectation}, \eqref{eq:expectation1}, and \eqref{eq:spreadBinomial} shows that
		\[\E[\c{S}(\b{W}')]\le \f{{w+r\choose r}}{{n\choose r}} |\cH|(C/2)^{-k}\cdot \sum_{j\ge k} (C/2)^{k-j}\le   \f{{w+r\choose r}}{{n\choose r}} |\cH|(C/2)^{-k}\cdot 2,\]
		where this last step used $C\ge 4$.  Plugging this into \eqref{eq:probW'} shows that the number of $W'\in {V\sm S\choose w}$ such that $(S,W')$ is $k$-bad and pathological is at most 
		
		\[ 2(C/2)^{-k} |\cH|\f{{w+r\choose r}}{{n\choose r} N}\cdot {n-r\choose w}= 2(C/2)^{-k/2}\cdot {n-r\choose w}.\]
		Combining this with the fact that there were $|\cH|\cdot {r\choose t}$ ways of choosing $S$ and $S\cap W$ gives the claim.
	\end{proof}

	In total $|\c{B}_{t}|$ is at most the sum of the bounds from these two claims.  Using this and $w=pn-t$ implies
	\begin{align*}\sum_{t\le r}|\c{B}_{t}|&\le\sum_{t\le r} 3 (C/2)^{-k/2} |\cH|{r\choose t}{n-r\choose pn-t}\\&= 3 (C/2)^{-k/2}|\cH| {n\choose pn},\end{align*}
	giving the desired result.
\end{proof}

\subsection{Auxiliary Lemmas}

To prove Theorem~\ref{thm:spreadMain}, we need to consider two special cases.  The first is when $\cH$ is $r$-uniform with $r$ relatively small.  In this case the following lemma gives effective bounds. 
\begin{lem}[\cite{Fractional}]\label{lem:small}
	Let $\cH$ be a $q$-spread $r$-bounded hypergraph on $V$ and $\al\in (0,1)$ such that $\al\ge 2rq$.  If $W$ is a set of size $\al|V|$ chosen uniformly at random from $V$, then the probability that $W$ does not contain an element of $\cH$ is at most
	\[2e^{-\al/(2rq)}.\] 
\end{lem}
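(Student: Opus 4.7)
The plan is to iterate a single-round covering bound in the style of \cite{Sunflower,Fractional}. Concretely, I would set $T:=\lfloor \al/(2rq)\rfloor$ and realize $W$ as the disjoint union $W_1\cup\cdots\cup W_T$ of chunks of size approximately $2rq|V|$; for instance, take $W$ to be the first $\al|V|$ entries of a uniformly random permutation of $V$ and let each $W_i$ be a consecutive block. A standard monotone coupling lets one pass to a Bernoulli-sampling model in which the $W_i$ are independent, if that is more convenient. The goal is then to show that, conditional on none of the previous chunks covering an edge, the next chunk covers an edge with probability bounded below by an absolute constant.

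The technical heart is a single-round claim: for any $q$-spread, $r$-bounded hypergraph $\cH'$ on a vertex set $V'$ and any uniformly random $W'\sub V'$ of size $\be|V'|$ with $\be\ge 2rq$, the set $W'$ contains an edge of $\cH'$ with probability at least some absolute constant $c_0>0$. I would attack this with the second moment method applied to $X:=|\{S\in\cH':S\sub W'\}|$. Summing $d_{\cH'}(A)\le q^{|A|}|\cH'|$ over $A\sub S$ of size $k$ gives $M_k(S)\le\binom{r}{k}q^k|\cH'|$ for $S\in\cH'$, which feeds into a routine expansion of $\E[X^2]$ to yield
\[\f{\E[X^2]}{\E[X]^2}\le (1+q/\be)^r\le e^{1/2},\]
and Paley--Zygmund then gives $\Pr[X>0]\ge e^{-1/2}$.

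To iterate, observe that if none of the first $i-1$ chunks contain an edge of $\cH$, then the trimmed hypergraph $\cH^{(i)}:=\{S\sm (W_1\cup\cdots\cup W_{i-1}):S\in\cH\}$ on the remaining vertex set is $q$-spread, $r$-bounded, and has $|\cH^{(i)}|=|\cH|$: for any $A$ disjoint from the earlier chunks, $d_{\cH^{(i)}}(A)=d_\cH(A)\le q^{|A|}|\cH^{(i)}|$. Applying the single-round claim to $(\cH^{(i)},W_i)$ and multiplying failure probabilities over the $T$ rounds gives an overall failure bound of the form $(1-c_0)^T=O(e^{-\al/(2rq)})$. The main obstacle will be pinning down the exact constants to recover $2e^{-\al/(2rq)}$: the crude Paley--Zygmund estimate gives per-chunk failure $1-e^{-1/2}\approx 0.39$, which only barely compounds into $e^{-T}$, and closing this gap likely requires slightly larger chunks or replacing Paley--Zygmund with Janson's inequality. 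The prefactor $2$ and handling the non-integer part of $\al/(2rq)$ are routine bookkeeping.
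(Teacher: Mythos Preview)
The paper does not actually prove this lemma; it is quoted verbatim from \cite{Fractional} and used as a black box, so there is no in-paper argument to compare against. What matters for the present paper is only that some bound of the shape $C_1e^{-c_1\al/(rq)}$ holds, since the constants are eventually absorbed into $K_0$; your strategy would deliver that.

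That said, two points in your sketch deserve attention. First, as you already note, Paley--Zygmund with chunks of size $2rq|V|$ gives per-round failure at most $1-e^{-1/2}\approx 0.393$, and $(1-e^{-1/2})^{T}$ is genuinely larger than $2e^{-T}$ for large $T$; neither tweaking the chunk size nor switching to the basic Janson bound $\exp(-\mu^2/(\mu+\Del))$ recovers the precise constant $2e^{-\al/(2rq)}$. So your argument yields the lemma only up to constants, which is enough for every application in this paper but not for the stated inequality.

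Second, and more substantively, your single-round bound $\E[X^2]/\E[X]^2\le (1+q/\be)^r$ is justified only when the hypergraph is $r$-uniform: the computation uses $M_k(S)\le\binom{r}{k}q^k|\cH'|$ to control $\E[X^2]$, but compares to $\E[X]^2=(|\cH'|\be^r)^2$, and this comparison breaks when edges have varying sizes because $\E[X]=\sum_S\be^{|S|}$ can be much smaller than $|\cH'|$ without the compensating factor $\be^r$. Since the lemma is stated for $r$-bounded (not $r$-uniform) $\cH$, and since your iterated hypergraphs $\cH^{(i)}=\{S\setminus(W_1\cup\cdots\cup W_{i-1}):S\in\cH\}$ are in any case non-uniform, this step needs an additional idea---for instance, bucketing $\cH^{(i)}$ by edge size and passing to a uniform subfamily of density at least $1/r$, at the cost of a harmless $\log r$ factor in the exponent.
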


The other special case we consider is the following.
\begin{lem}\label{lem:uniform}
	Let $\cH$ be an $r$-uniform $(q;r,1)$-spread hypergraph on $V$ and $\al\in(0,1)$ such that $\al\ge 4q$.  If $W$ is a set of size $\al|V|$ chosen uniformly at random from $V$, then the probability that $W$ does not contain an edge of $\cH$ is at most \[4q\al^{-1}+2e^{-\al|V|/4}.\]
\end{lem}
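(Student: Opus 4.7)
The plan is to carry out a second-moment (Paley--Zygmund) argument in the independent Bernoulli model, then couple to the uniform model by rejection sampling.

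First I would introduce the binomial sample $W_b\sub V$ obtained by including each vertex independently with probability $p:=\al/2$, so that $p\ge 2q$ by hypothesis. Let $X=|\{S\in\cH:S\sub W_b\}|$. Writing $\E[X^2]=\sum_{S,S'\in\cH}p^{|S\cup S'|}$ and grouping by $j=|S\cap S'|$, the $(q;r,1)$-spread condition gives $m_j(S):=|\{S'\in\cH:|S\cap S'|=j\}|\le M_j(S)\le q^j|\cH|$ for every $j\ge 1$, whence
\[
\E[X^2]\le |\cH|^2p^{2r}\left(1+\sum_{j\ge 1}(q/p)^j\right)\le |\cH|^2p^{2r}\left(1+2q/p\right).
\]
Since $\E[X]=|\cH|p^r$ and $q/p\le 1/2$, Paley--Zygmund yields $\Pr[X=0]\le 2q/p=4q/\al$ in the binomial model.

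To transfer this bound to the uniform model, I would sample $W_b$ and, on the event $|W_b|\le\al|V|$, complete it to a set $W$ of size $\al|V|$ by adjoining $\al|V|-|W_b|$ vertices chosen uniformly from $V\sm W_b$. A direct symmetry calculation shows that, conditional on $|W_b|\le\al|V|$, the resulting $W$ is uniformly distributed on $\binom{V}{\al|V|}$, and of course $W\supseteq W_b$. Since ``$W$ has no edge'' implies ``$W_b$ has no edge'', I get
\[
\Pr_{\mathrm{unif}}[W\text{ has no edge}]\le \f{\Pr[W_b\text{ has no edge}]}{\Pr[|W_b|\le\al|V|]}\le \f{4q/\al}{1-\Pr[|W_b|>\al|V|]}.
\]

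Finally I would apply a Chernoff-type bound on the upper tail of $|W_b|\sim\mathrm{Bin}(|V|,\al/2)$ (mean $\al|V|/2$) to deduce $\Pr[|W_b|>\al|V|]\le e^{-c\al|V|}$ for an absolute constant $c>0$; combining this with the elementary inequality $(1-x)^{-1}\le 1+2x$ for $x\le 1/2$ and the bound $q/\al\le 1/4$ produces $\Pr_{\mathrm{unif}}[W\text{ has no edge}]\le 4q/\al+2e^{-c\al|V|}$. The main obstacle is matching the precise constant $c=1/4$ stated in the lemma: the standard multiplicative Chernoff with $p=\al/2$ only yields $c\approx 0.19$, so reaching $c=1/4$ will likely require either tweaking $p$ slightly (with a small cost in the leading constant that $q/\al\le 1/4$ can absorb) or applying a sharper concentration inequality. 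The structural scheme--spread condition $\Rightarrow$ Paley--Zygmund $\Rightarrow$ binomial-to-uniform coupling--is otherwise clean.
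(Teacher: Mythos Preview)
Your approach is essentially the same as the paper's: binomial sample at density $\alpha/2$, second-moment bound on $\Pr[X=0]$ using the $(q;r,1)$-spread condition to control $m_j(S)$, then transfer to the uniform model via Chernoff. The paper phrases the second-moment step as Chebyshev rather than Paley--Zygmund, but these are equivalent here and yield the same $4q/\alpha$.

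The only noticeable difference is the transfer step. You divide by $\Pr[|W_b|\le\alpha|V|]$ and then expand $(1-x)^{-1}$; the paper uses the simpler additive inequality
\[
\Pr[W'\text{ contains an edge}\mid |W'|\le\alpha|V|]\ \ge\ \Pr[W'\text{ contains an edge}]-\Pr[|W'|>\alpha|V|],
\]
which gives $\Pr[W\text{ has no edge}]\le 4q/\alpha+\Pr[|W'|>\alpha|V|]$ directly, with no need for the $(1-x)^{-1}\le 1+2x$ step or the auxiliary assumption $x\le 1/2$. This is a cleaner route to the same bound.

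On your worry about $c=1/4$: the paper simply cites ``the Chernoff bound'' for $\Pr[|W'|>\alpha|V|]\le 2e^{-\alpha|V|/4}$ without further justification, so you are right that pinning down this exact exponent from the most common multiplicative forms is not immediate. But this is purely a constant-chasing issue; any $c>0$ suffices for every application in the paper, and no tweak of $p$ is needed for the argument to go through.
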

\begin{proof}
	Let $W'$ be a random set of $V$ obtained by including each vertex independently and with probability $\al/2$. Let $X=|\{S\in \cH: S\sub W'\}|$ and define $m_j(S)$ to be the number of $S'\in \cH$ with $|S\cap S'|=j$.  Note that $\E[X]=(\al/2)^r|\cH|$ and that
	\begin{align*}\Var(X)&\le (\al/2)^{2r}\sum_{S\in \cH}\sum_{S'\in \cH,\ S\cap S'\ne \emptyset} (\al/2)^{-|S\cap S'|} =(\al/2)^{2r}\sum_{S\in \cH}\sum_{j=1}^r (\al/2)^{-j} \cdot m_j(S)\\ 
		&\le(\al/2)^{2r}\sum_{S\in \cH}\sum_{j=1}^r (\al/2)^{-j} \cdot q^j |\cH| =(\al/2)^{2r}\sum_{j=1}^r (\al/2q)^{-j} |\cH|^2\\&= \E[X]^2 (\al/2q)^{-1} \sum_{j=1}^{r} (\al/2q)^{1-j}\le 4\E[X]^2 q\al^{-1}, 
	\end{align*} 
	where the second inequality used that $\cH$ being $(q;r,1)$-spread implies $m_j(S)\le q^j |\cH|$ for all $S\in \cH$ and $j\ge 1$, and the last inequality used $\al/2q\ge 2$.  By Chebyshev's inequality we have
	\[\Pr[X=0]\le \Var(X)/\E[X]^2\le 4q\al^{-1}.\]
	
	Lastly, observe that
	\begin{align*}\Pr[W\tr{ contains an edge of }\cH]&\ge \Pr[W'\tr{ contains an edge of }\cH\big| |W'|\le \al |V|]\\ &\ge \Pr[W'\tr{ contains an edge of }\cH]-\Pr[W'>\al |V|].\end{align*}
	By the Chernoff bound (see for example \cite{ProbMeth}) we have $\Pr[|W'|> \al |V|]\le 2e^{-\al |V|/4}$.  Note that $W'$ contains an edge of $\cH$ precisely when $X>0$, so the result follows from our analysis above.
\end{proof}

We conclude this subsection with a small observation.
\begin{lem}\label{lem:vertBound}
	If $\cH$ is an $r_1$-uniform $(q;r_1,\ldots,r_\ell)$-spread hypergraph on $V$, then $r_1\le eq|V|$.
\end{lem}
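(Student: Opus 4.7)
The plan is to extract a codegree bound from the spread condition at the smallest scale $r_\ell$, and then execute a standard double counting of pairs $(A,S)$ with $A\in\binom{V}{r_\ell}$ and $A\sub S\in\cH$. Concretely, taking $i=\ell-1$ in Definition~\ref{defMain} (so $r_{i+1}=r_\ell$) and $j=r_\ell$, for any $A\sub V$ with $|A|=r_\ell$ and $d(A)>0$, I get $M_{r_\ell}(A)\le q^{r_\ell}|\cH|$. Because $\cH$ is $r_1$-uniform and $r_\ell=|A|$, the condition $|A\cap S|\ge r_\ell$ forces $A\sub S$, so $M_{r_\ell}(A)=d(A)$. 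Hence $d(A)\le q^{r_\ell}|\cH|$ for every $r_\ell$-subset of $V$ (the bound being vacuous when $d(A)=0$).

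Next I would double count. Each edge of $\cH$ has size $r_1$ and contains exactly $\binom{r_1}{r_\ell}$ subsets of size $r_\ell$, so
\[|\cH|\binom{r_1}{r_\ell}=\sum_{A\in\binom{V}{r_\ell}}d(A)\le \binom{|V|}{r_\ell}q^{r_\ell}|\cH|,\]
and dividing through by $|\cH|>0$ gives $\binom{r_1}{r_\ell}\le q^{r_\ell}\binom{|V|}{r_\ell}$. Feeding in the standard estimates $\binom{r_1}{r_\ell}\ge (r_1/r_\ell)^{r_\ell}$ and $\binom{|V|}{r_\ell}\le (e|V|/r_\ell)^{r_\ell}$ and taking $r_\ell$-th roots then yields $r_1\le eq|V|$.

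No step is a real obstacle; the argument is a short double count paired with routine binomial inequalities. The one point worth flagging is that the spread condition is being applied at $|A|=r_\ell$, which requires $\ell\ge 2$ so that $i=\ell-1$ is an admissible index in Definition~\ref{defMain}. When $\ell=1$ the spread hypothesis imposes no constraints at all and the lemma is false, so that degenerate case has to be excluded (or treated separately under additional assumptions).
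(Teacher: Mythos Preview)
Your argument is correct. The paper's proof is a close cousin but works at the opposite scale: instead of bounding $d(A)$ for all $r_\ell$-sets and double counting, it picks a single edge $S\in\cH$ of maximum multiplicity $m$, applies the spread condition at $i=1$ with $j=r_1$ to get $m=M_{r_1}(S)\le q^{r_1}|\cH|$, and combines this with the trivial bound $|\cH|\le m\binom{|V|}{r_1}\le m(e|V|/r_1)^{r_1}$ to conclude $r_1\le eq|V|$. So both proofs extract a codegree bound from the spread hypothesis and feed it through the standard binomial estimates; the only difference is that the paper works with one extremal edge at the top scale $r_1$, whereas you average over all $r_\ell$-subsets at the bottom scale. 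The paper's version is marginally slicker (no double count needed), but your version is just as short and equally valid. Your observation that $\ell\ge 2$ is implicitly required applies to the paper's proof as well, since taking $i=1$ in Definition~\ref{defMain} also needs $\ell\ge 2$.
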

\begin{proof}
	Let $m=\max_{S\in \cH} d(S)$, i.e. this is the maximum multiplicity of any edge in $\cH$.  Then for any $S\in \cH$ with $d(S)=m$, we have
	\[m=M_{r_1}(S)\le q^{r_1}|\cH|\le q^{r_1}\cdot m{|V|\choose r_1}\le m(eq|V|/r_1)^{r_1},\]
	proving the result.
\end{proof}

\subsection{Putting the Pieces Together}
We now prove a technical version of Theorem~\ref{thm:spreadMain} with more explicit quantitative bounds.  Theorem~\ref{thm:spreadMain} will follow shortly (but not immediately) after proving this.

\begin{thm}\label{thm:spreadMainTech}
	Let $\cH$ be an $r_1$-uniform $(q;r_1,\ldots,r_\ell,1)$-spread hypergraph on $V$ and let $C\ge 8$ be a real number.  If $W$ is a set of size $2C\ell q|V|$ chosen uniformly at random from $V$, then
	\begin{equation}\Pr[W\tr{ contains an edge of }\cH]\ge1-6\ell^2 (C/4)^{-r_\ell/2}-40(C \ell)^{-1},\label{eq:spreadBound2}\end{equation}
	and for any $i$ with $4r_i\le C\ell $ we have
	\begin{equation}\Pr[W\tr{ contains an edge of }\cH]\ge 1-6\ell^2 (C/4)^{-r_{i}/2}-2e^{-C\ell/4r_{i}}.\label{eq:spreadBound1}\end{equation}
\end{thm}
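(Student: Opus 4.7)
The plan is to mimic the iterative strategy of~\cite{Hamiltonian}: sample $W$ as a union of $\ell$ independent uniform random subsets $W_1,\ldots,W_\ell\sub V$, each of size $p|V|=2Cq|V|$, and let $U_i=W_1\cup\cdots\cup W_i$. Since the event ``$W$ contains an edge of $\cH$'' is upward-closed and $|U_\ell|\le 2C\ell q|V|$, a standard monotonicity/coupling argument reduces the problem to lower-bounding $\Pr[U_\ell\text{ contains an edge of }\cH]$.

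Before iterating, I would separate out the cases where Lemma~\ref{lem:spreadMain} is not directly applicable: if $2Cq>\tfrac12$ then $W$ is already so large that Lemma~\ref{lem:small} (applied to $\cH$ viewed as $q$-spread via Proposition~\ref{prop:spreadEquiv}(a)) gives the bound on its own; otherwise $p\le\tfrac12$, and $pn\ge 2r_1$ follows from Lemma~\ref{lem:vertBound} once $C$ exceeds a small constant. In the main case, for each $i=1,\ldots,\ell-1$ I would construct a derived hypergraph $\cH^{(i)}$ on $V\sm U_i$ whose edges are the tails $S\sm U_i$ of those $S\in\cH$ with $|S\sm U_i|\le r_{i+1}$. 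Applying Lemma~\ref{lem:spreadMain} at round $i$ (with $r=r_i$, $k=r_{i+1}$) together with Markov's inequality yields, with failure probability $O(\ell\cdot (C/4)^{-r_{i+1}/2})$, that at least half of the edges of $\cH^{(i-1)}$ survive the round as witnesses; thanks to the chain relation $S^{(i)}\sub S^{(i-1)}\cup W_i$ and $|S^{(i)}\sm W_i|\le r_{i+1}$ the tail of $S^{(i)}$ with respect to $U_i$ is indeed at most $r_{i+1}$, so the spread of the original $\cH$ is inherited by $\cH^{(i)}$ up to a constant factor.

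For the final step I would choose a truncation index $i^*$ and finish by either Lemma~\ref{lem:uniform} or Lemma~\ref{lem:small}. For~\eqref{eq:spreadBound2}, take $i^*=\ell-1$ and apply Lemma~\ref{lem:uniform} to the $r_\ell$-bounded, approximately $(q;r_\ell,1)$-spread hypergraph $\cH^{(\ell-1)}$: after rescaling densities, the $4q\al^{-1}$ term with $\al\asymp C\ell q$ contributes the $O((C\ell)^{-1})$ error and the Chernoff term is negligible. For~\eqref{eq:spreadBound1}, take $i^*=i-1$, note that $\cH^{(i-1)}$ is $r_i$-bounded and (essentially) $q$-spread by Proposition~\ref{prop:spreadEquiv}(a), and apply Lemma~\ref{lem:small} with $\al\asymp C\ell q$ and $r=r_i$ to obtain the $2e^{-C\ell/(4r_i)}$ term; the hypothesis $4r_i\le C\ell$ is exactly what makes $\al\ge 2r_i q$, so Lemma~\ref{lem:small} is applicable. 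A union bound over the $\ell$ iterative failures contributes the $6\ell^2(C/4)^{-r_\ell/2}$ (respectively $6\ell^2(C/4)^{-r_i/2}$) term.

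The main obstacle is preventing the spread parameter $q$ from degrading by a multiplicative factor of $2$ at each round, which would compound to $2^\ell$ and ruin the bound. The fix is to always compare the degrees of $\cH^{(i)}$ to the original $|\cH|$ (rather than to the shrinking $|\cH^{(i)}|$): maintaining the invariant $|\cH^{(i)}|\ge|\cH|/2$ throughout via Markov incurs a single overall constant factor in $q$, not an exponential one. A secondary issue is that $\cH^{(i)}$ is $r_{i+1}$-bounded but not $r_{i+1}$-uniform, so Lemma~\ref{lem:spreadMain}'s uniformity hypothesis is not literally met; this is handled by restricting to the uniform part at each round, or by observing that the proof of Lemma~\ref{lem:spreadMain} carries over to the $r_{i+1}$-bounded setting with only cosmetic changes.
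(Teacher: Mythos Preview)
Your overall strategy---iterate with $\ell$ random pieces, apply Lemma~\ref{lem:spreadMain} at each round, keep $|\cH^{(i)}|\ge\tfrac12|\cH|$ via Markov with threshold $1/(2\ell)$, and finish with Lemma~\ref{lem:small} or Lemma~\ref{lem:uniform}---is exactly the paper's. But the issue you label ``secondary'' is in fact the crux, and neither of your proposed fixes works. Restricting to the $r_{i+1}$-uniform part of $\cH^{(i)}$ can throw away essentially everything: the condition defining $\cH^{(i)}$ is $|S\setminus U_i|\le r_{i+1}$, and equality will typically be rare. And the extension of Lemma~\ref{lem:spreadMain} to $r$-bounded hypergraphs is \emph{not} cosmetic: as the paper itself notes in Section~\ref{sec:con}, one partitions into uniform layers and sums, picking up an extra factor of $r_i$ at round $i$. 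That factor is harmless only under a hypothesis like $\log r_i\le K r_{i+1}$ (this is precisely why Theorem~\ref{thm:nonUniform} assumes it), which is absent here; without it the term $r_i(C/4)^{-r_{i+1}/2}$ need not be small and the stated constants in \eqref{eq:spreadBound1}--\eqref{eq:spreadBound2} fail.

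The paper's fix is a one-line idea you are missing: instead of recording the tail $S\setminus U_i$, pad it back up. Given a good pair $(S,W_i)$ with witness $S'\in\cH_i$, $S'\subseteq S\cup W_i$, $|S'\setminus W_i|\le r_{i+1}$, choose any $A_S\subseteq S$ of size \emph{exactly} $r_{i+1}$ containing $S'\setminus W_i$ (possible since $|S|=r_i\ge r_{i+1}$), and let $\cH_{i+1}=\{A_S\}$. Now every $\cH_i$ is $r_i$-uniform, Lemma~\ref{lem:spreadMain} applies verbatim, and since $A_S\subseteq S\subseteq\phi_i(S)\in\cH$ the spread of $\cH$ is inherited with only the single factor-of-$2$ loss you already anticipated. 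A smaller point: your equal split $|W_i|=2Cq|V|$ does not leave $\alpha\asymp C\ell q$ for the final Lemma~\ref{lem:small} step when $i$ is close to $\ell$; the paper instead reserves a single large set $W'$ of size $C\ell q|V|$ for that step and uses $\ell-1$ sets of size $Cq|V|$ for the iteration.
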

%We note that the bound of \eqref{eq:spreadBound1} can be much stronger than the bound stated in Theorem~\ref{thm:spreadMain}.  Sometimes such quantiative bounds are needed for applications, and in particular this was the case in \cite{Sunflower} where the authors needed to show that $W$ contained an edge with probability greater than $1/k$, where $k$ is the number of edges in a sunflower.

%\footnote{Observe that this is somewhat reminiscent of the proof of the container lemma where we iteratively reduced the uniformity of our initial hypergraph while maintaining some ``nice'' properties.}
\begin{proof}
	%The intuitive idea of the argument is to iteratively take random sets $W_1,\ldots,W_\ell$ of size $Cq|V|$ from $V$.  If we let $\cH_i$ be the hypergraph after deleting $W_1\cup \cdots \cup W_{i-1}$ from $V$, then we would like to say that most $S\in \cH_i$ have $|S\sm W_i|\le r_{i+1}$.  This would imply at the final step that almost every edge of $\cH$ is covered by $\bigcup W_i$.  This will not happen exactly for each edge $S\in \cH_i$, but by Lemma~\ref{lem:spreadMain}, most $S\in \cH_i$ will have some $S'\sub S\cup W_i$ which has $|S'\sm W_i|\le r_i$.  We can thus achieve our goal by letting the edge set of $\cH_i$ consist of these $S'\sm W_i$ sets; though in order to keep our hypergraph uniform, we will actually take some set $A_S$ which contains $S'\sm W_i$.  We now move onto the technical details.
	
	Define $p:=Cq$ and $n:=|V|$.  We can assume $p\le \half$, as otherwise the result is trivial.  Let $W_1,\ldots W_{\ell-1}$ be chosen independently and uniformly at random from ${V\choose pn}$.  Throughout this proof we let $r_{\ell+1}=1$.

	Let $\cH_1=\cH$ and let $\phi_1:\cH_1\to \cH$ be the identity map.  Inductively assume we have defined $\cH_i$ and $\phi_i:\cH_i\to \cH$ for some $1\le i<\ell$. Let $\cH'_i\sub \cH_i$ be all the edges $S\in \cH_i$ such that $(S,W_{i})$ is $r_{i+1}$-good with respect to $\cH_{i}$.  Thus for each $S\in \cH'_i$, there exists an $S'\in \cH_i$ such that $S'\sub S\cup W_i$ and $|S'\sm W_i|\le r_{i+1}$.  Choose such an $S'$ for each $S\in \cH'_i$ and let $A_S$ be any subset of $S$ of size exactly $r_{i+1}$ that contains $S'\sm W_i$ (noting that $S'\sm W_i\sub S$ since $S'\sub S\cup W_i$).  Finally, define $\cH_{i+1}=\{A_S: S\in \cH_i'\}$ and $\phi_{i+1}:\cH_{i+1}\to \cH$ by $\phi_{i+1}(A_S)=\phi_i(S)$. 
	
	Intuitively, $\phi_i(A)$ is meant to correspond to  the ``original'' edge $S\in \cH$ which generated $A$.  More precisely, we have the following.
	\begin{claim}\label{cl:phi}
		For $i\le \ell$, the maps $\phi_i$ are injective and $A\sub \phi_i(A)$ for all $A\in \cH_i$.
	\end{claim}
	\begin{proof}
		This claim trivially holds at $i=1$.  Inductively assume the result has been proved through some value $i$.  Observe that in the process for generating $\c{H}_{i+1}$, we have implicitly defined a bijection $\psi:\cH_i'\to \cH_{i+1}$ through the correspondence $\psi(S)=A_S$.
		
		By construction of $\phi_{i+1}$, we have $\phi_{i+1}(A)=\phi_i(\psi^{-1}(A))$, so $\phi_{i+1}$ is injective since $\phi_i$ was inductively assumed to be injective and $\psi$ is a bijection.  Also be construction we have $A\sub \psi^{-1}(A)$, and by the inductive hypothesis we have $\psi^{-1}(A)\sub \phi_i(\psi^{-1}(A))=\phi_{i+1}(A)$.  This completes the proof.
	\end{proof}
	
	For $i<\ell$, we say that $W_i$ is \textit{successful} if $|\cH_{i+1}|\ge (1-\rec{2\ell})|\cH_{i}|$.  Note that $|\cH_{i+1}|=|\cH_{i}'|$, so this is equivalent to saying that the number of $r_{i+1}$-bad pairs $(S,W_i)$ with $S\in \cH_{i}$ is at most $\rec{2\ell}|\cH_{i}|$.
	\begin{claim}\label{cl:spread}
		For $i\le \ell$, if $W_1,\ldots,W_{i-1}$ are successful, then $\cH_i$ is $(2q;r_i,\ldots,r_{\ell},1)$-spread.
	\end{claim}
	\begin{proof}
		For a hypergraph $\cH'$, we let $M_j(A;\cH')$ denote the number of edges of $\cH'$ intersecting $A$ in at least $j$ vertices.  By Claim~\ref{cl:phi}, if $\{A_1,\ldots,A_t\}$ are the set of edges of $\cH_i$ which intersect some set $A$ in at least $j$ vertices, then $\{\phi_i(A_1),\ldots,\phi_i(A_t)\}$ is a set of $t$ distinct edges of $\cH$ intersecting $A$ in at least $j$ vertices.  Thus for all sets $A$ and integers $j$ we have  $M_j(A;\cH_i)\le M_j(A;\cH)$.  
		
		If $A$ is contained in an edge $A'$ of $\cH_i$, then by Claim~\ref{cl:phi} $A$ is contained in the edge $\phi_i(A')$ of $\cH$.  Thus $d_{\c{H}_i}(A)>0$ implies $d_{\c{H}}(A)>0$.  By assumption of $\cH$ being $(q;r_1,\ldots,r_\ell,1)$-spread, if $A$ is a set with $r_{i'}\ge |A|\ge r_{i'+1}$ for some integer $i'$ such that $d_{\cH_i}(A)>0$, and if $j$ is an integer satisfying $j\ge r_{i'+1}$, then our previous observations imply
		
		\begin{equation}M_j(A;\cH_i)\le M_j(A;\cH) \le q^j |\cH|.\label{eq:M}\end{equation}
		
		Because each of $W_1,\ldots,W_{i-1}$ were successful, we have \[|\cH_i|\ge \l(1-\rec{2\ell}\r)^i |\cH|\ge \l(1-\rec{2\ell}\r)^\ell |\cH|\ge \half |\cH|,\]
		where in this last step we used that $(1-1/(2x))^x$ is an increasing function for $x\ge 1$. Plugging $|\cH|\le 2|\cH_i|$ into \eqref{eq:M} shows that $\cH_i$ is $(2q;r_i,\ldots,r_{\ell},1)$-spread as desired.
	\end{proof}
	\begin{claim}\label{cl:success}
		For $i< \ell$,
		\[\Pr[W_{i}\tr{ is not successful }|\ W_1,\ldots,W_{i-1}\tr{ are successful}]\le 6\ell (C/4)^{-r_{i+1}/2}.\]
	\end{claim}
	\begin{proof}
		By construction $\cH_i$ is $r_i$-uniform.  Conditional on $W_1,\ldots,W_{i-1}$ successful, Claim~\ref{cl:spread} implies that $\cH_{i}$ is in particular $(2q;r_{i},r_{i+1})$-spread.  By hypothesis we have $p\le \half$ and $C/2\ge 4$, and by Lemma~\ref{lem:vertBound} applied to $\cH$ we have $2r_i\le pn$ since $C\ge 2e$.  Thus we can apply Lemma~\ref{lem:spreadMain} to $\cH_i$ (using $C/2$ instead of $C$), which shows that the expected number of $r_{i+1}$-bad pairs $(S,W_i)$ is at most $3(C/4)^{-r_{i+1}/2}|\cH_i|$.  By Markov's inequality, the probability of there being more than $\rec{2\ell} |\cH_i|$ total $r_{i+1}$-bad pairs is at most $6\ell (C/4)^{-r_{i+1}/2}$, giving the result.
	\end{proof}
	We are now ready to prove the result.  Let $W$ and $W'$ be sets of size $2\ell pn$ and $\ell pn$ chosen uniformly at random from $V$.  Observe that for any $1\le i\le \ell$, the probability of $W$ containing an edge of $\cH$ is at least the probability of $W_1\cup \cdots\cup W_{i-1} \cup W'$ containing an edge of $\cH$, and  this is at least the probability that $W'$ contains an edge of $\cH_{i}$ (since every edge of $\cH_i$ is an edge of $\cH$ after removing vertices that are in $W_1\cup \cdots \cup W_{i-1}$), so it suffices to show that this latter probability is large for some $i$.  
	
	By Proposition~\ref{prop:spreadEquiv}(a) and Claim~\ref{cl:spread}, the hypergraph $\cH_{i}$ will be $(2q)$-spread if $W_1,\ldots,W_{i-1}$ are all successful.  If $i$ is such that $C\ell \ge 4r_i$, then by Claim~\ref{cl:success} and Lemma~\ref{lem:small} the probability that $W_1,\ldots,W_{i-1}$ are all successful and $W'$ contains an edge of $\cH_i$ is at least
	\[1-6\ell^2 (C/4)^{-r_{i}/2}-2e^{-C\ell/4r_{i}},\]
	giving \eqref{eq:spreadBound1}.
	
	Alternatively, the probability that $W'$ contains an edge of $\cH_{\ell}$ can be computed using Lemma~\ref{lem:uniform}, which gives that the probability of success is at least
	\[1-6\ell^2 (C/4)^{-r_\ell/2}-16(C \ell)^{-1}-2e^{-C\ell q n/4}.\]
	Using $qn\ge e^{-1}r_1\ge 1/3$ from Lemma~\ref{lem:vertBound} together with $e^{-x}\le x^{-1}$ gives \eqref{eq:spreadBound2}
	as desired.
\end{proof}

We now use this to prove Theorem~\ref{thm:spreadMain}.
\begin{proof}[Proof of Theorem~\ref{thm:spreadMain}]
	There exists a large constant $K'$ such that if\footnote{We consider $\log(\ell+1)$ as opposed to $\log(\ell)$ to guarantee that this is a positive number for all $\ell\ge 1$.} $r_\ell\ge K' \log (\ell+1)$, then the result follows from \eqref{eq:spreadBound2}.  If this does not hold and if $r_1>K'\log(\ell+1)$, then there exists some $I\ge 2$ such that $r_{I-1}> K'\log(\ell+1)\ge r_I$.  If $r_I=K'\log(\ell+1)$, then the result follows from \eqref{eq:spreadBound1} with $i=I$ provided $C$ is sufficiently large in terms of $K'$.  Otherwise we define a new sequence of integers $r'_1,\ldots,r'_{\ell+1}$ with $r'_i=r_i$ for $i<I$, $r'_I=K'\log(\ell+1)$, and $r'_i=r_{i-1}$ for $i>I$.  It is not hard to see that $\cH$ is $(q;r_1',\ldots,r'_{\ell+1},1)$-spread, so the result follows\footnote{The bound of \eqref{eq:spreadBound1} now uses $\ell+1$ instead of $\ell$ throughout because we are working with the $r'_i$ sequence, but this does not affect the final result.} from \eqref{eq:spreadBound1} with $i=I$.
	
	It remains to deal with the case $r_1\le K'\log(\ell+1)$.  Because $\ell\le r_1$, this can only hold if $r_1\le K''$ for some large constant $K''$.  In this case we can apply Lemma~\ref{lem:small} to give the desired result by choosing $K_0$ sufficiently large in terms of $K''$.
\end{proof} 
	
\section{Concluding Remarks}\label{sec:con}
With a very similar proof one can prove the following non-uniform analog of Theorem~\ref{thm:spreadMain}.

\begin{thm}\label{thm:nonUniform}
	Let $\cH$ be a $(q;r_1,\ldots,r_\ell,1)$-spread hypergraph on $V$ and define $s=\min_{S\in \cH}|S|$.  Assume that there exists a $K$ such that $r_1\le K q|V|$, and such that for all $i$ with $r_{i}>s$ we have $\log r_i\le K r_{i+1}$.  Then there exists a constant $K_0$ depending only on $K$ such that if $r_\ell\le \max\{s,K_0\log(\ell+1)\}$ and $C\ge K_0$, then a set $W$ of size $C\ell q|V|$ chosen uniformly at random from $V$ satisfies
	\[\Pr[W\tr{ contains an edge of }\cH]\ge 1-\f{K_0}{C\ell}.\]
\end{thm}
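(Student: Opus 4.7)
The plan is to mirror the proof of Theorem~\ref{thm:spreadMain} closely, with modifications to handle variable edge sizes throughout the iterative sparsification. First I would adapt the construction of Theorem~\ref{thm:spreadMainTech}: set $\cH_1=\cH$ and at step $1\le i<\ell$ sample an independent $W_i\in\binom{V}{pn}$ with $p=Cq$. Split $\cH_i$ into a big part $\cH_i^{\mr{big}}=\{S\in\cH_i:|S|>r_{i+1}\}$ and a small part $\cH_i^{\mr{small}}=\cH_i\sm\cH_i^{\mr{big}}$. For each $S\in\cH_i^{\mr{big}}$ for which $(S,W_i)$ is $r_{i+1}$-good, replace $S$ by a subset $A_S\sub S$ of size exactly $r_{i+1}$ containing the witness $S'\sm W_i$, while small edges are carried over unchanged. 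Define $\cH_{i+1}$ as the resulting multiset and extend $\phi_{i+1}$ accordingly. The analogues of Claim~\ref{cl:phi} and Claim~\ref{cl:spread} go through: $\cH_i$ is $r_i$-bounded, and conditional on the success of all earlier $W_j$ it is $(2q;r_i,r_{i+1},\ldots,r_\ell,1)$-spread.

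Next I would generalize Lemma~\ref{lem:spreadMain} to the non-uniform setting. Only edges with $|S|>r_{i+1}$ can participate in $r_{i+1}$-bad pairs, so I would partition $\cH_i^{\mr{big}}$ into size classes $\cH_i^{(r)}=\{S\in\cH_i:|S|=r\}$ for $r\in\{r_{i+1}+1,\ldots,r_i\}$. For each $r$ the pathological/non-pathological dichotomy of Lemma~\ref{lem:spreadMain} applies verbatim on $\cH_i^{(r)}$, using that the spread bound $M_j(S)\le q^j|\cH_i|$ holds for every $S\in\cH_i$ with $|S|\le r_i$ and $j\ge r_{i+1}$. Assembling the per-size-class bounds yields at most $O(r_i(C/2)^{-r_{i+1}/2}|\cH_i|)\binom{n}{pn}$ bad pairs in total, where the extra factor of $r_i$ (absent from the uniform case) arises from summing across size classes. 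Markov's inequality then bounds $\Pr[W_i\tr{ unsuccessful}\mid\tr{earlier successes}]$ by $O(\ell r_i(C/4)^{-r_{i+1}/2})$.

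For the endgame, the hypothesis $r_\ell\le\max\{s,K_0\log(\ell+1)\}$ places us in at least one of two regimes. If $r_\ell\le s$, every original edge has size at least $s\ge r_\ell$, so each surviving edge of $\cH_\ell$ has size exactly $r_\ell$; thus $\cH_\ell$ is $r_\ell$-uniform and $(2q;r_\ell,1)$-spread, and Lemma~\ref{lem:uniform} applied to a residual random set of size $\Theta(\ell q|V|)$ gives failure $O(1/(C\ell))$. If instead $r_\ell\le K_0\log(\ell+1)$, then $\cH_\ell$ is $r_\ell$-bounded and $(2q)$-spread by Proposition~\ref{prop:spreadEquiv}(a), and Lemma~\ref{lem:small} yields failure $2\exp(-\Omega(\ell/\log\ell))$. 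When the original sequence jumps past $\Theta(\log(\ell+1))$ without an intermediate value at that scale, I would follow the interpolation trick from the proof of Theorem~\ref{thm:spreadMain} and insert an artificial level $r'_I=\Theta(\log(\ell+1))$ between the appropriate consecutive $r_i$'s; inserting any strictly intermediate value preserves the spread hypothesis. The vertex bound $r_1\le Kq|V|$ replaces Lemma~\ref{lem:vertBound} and guarantees $pn\ge 2r_i$ throughout.

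The main obstacle is the extra $r_i$ factor appearing in the non-uniform analogue of Lemma~\ref{lem:spreadMain}. The hypothesis $\log r_i\le Kr_{i+1}$ is precisely what handles this: it forces $(C/4)^{-r_{i+1}/2}\le r_i^{-\log(C/4)/(2K)}$, so the $r_i$ factor is absorbed by a negative power of $r_i$ as soon as $C$ is chosen large in terms of $K$. Combined with the $\ell$-prefactor, each per-step failure becomes much smaller than $1/\ell^2$; a union bound over the $O(\ell)$ rounds, together with the endgame bounds and the choice of $K_0$ sufficiently large in terms of $K$, then delivers the claimed $K_0/(C\ell)$ total failure.
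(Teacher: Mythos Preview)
Your proposal is correct and follows essentially the same route as the paper's sketch: partition into size classes to extend Lemma~\ref{lem:spreadMain} (incurring an extra factor of $r_i$), carry small edges through unchanged while trimming large ones to size exactly $r_{i+1}$, absorb the $r_i$ factor via the hypothesis $\log r_i\le K r_{i+1}$, and finish with Lemma~\ref{lem:uniform} when $r_\ell\le s$ (so $\cH_\ell$ is uniform) or with Lemma~\ref{lem:small} plus the level-insertion trick otherwise. Your big/small split is exactly the paper's ``$A_S$ of size at most $r_{i+1}$ and at least $\min\{r_{i+1},s\}$'' phrased more explicitly, and your observation that edges of size $\le r_{i+1}$ are automatically $r_{i+1}$-good is the reason only the big part needs the bad-pair count.
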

Observe that if $\cH$ is $r_1$-uniform then this reduces to Theorem~\ref{thm:spreadMain} with the additional constraint that $r_1\le Kq|V|$ for some $K$.  By Lemma~\ref{lem:vertBound}, this extra condition is always satisfied for uniform hypergraphs with $K=e$. We note that Theorem~\ref{thm:nonUniform} together with Proposition~\ref{prop:spreadEquiv}(b) implies Theorem~\ref{thm:Fractional}.  We briefly describe the details on how to prove this.

\begin{proof}[Sketch of Proof]
	We first adjust the statement and proof of Lemma~\ref{lem:spreadMain} to allow $\cH$ to be $r$-bounded.  To do this, we partition $\cH$ into the uniform hypergraphs $\cH_{r'}=\{S\in \cH:|S|=r'\}$, and word for word the exact same proof\footnote{The $\cH_{r'}$ hypergraphs may not be spread, but they still have the property that $m_j(S)\le q^j|\cH|$ for all $S\in \cH_{r'}\sub \cH$, and this is the only point in the proof where we used that $\cH$ is spread.} as before shows that the number of $k$-bad pairs using $S\in \cH_{r'}$ is at most $3 (C/2)^{-k/2}|\cH| {n\choose pn}$.  We then add these bounds over all $r'$ to get the same bound as in Lemma~\ref{lem:spreadMain} multiplied by an extra factor of $r$.  With regards to the other lemmas, one no longer needs Lemma~\ref{lem:vertBound} due to the $r_1\le K q|V|$ hypothesis, and Lemmas~\ref{lem:small} and \ref{lem:uniform} are fine as is (in particular, Lemma~\ref{lem:uniform} still requires $\cH$ to be uniform).
	
	For the main part of the proof, instead of choosing $A_S$ to be a subset of $S$ of size exactly $r_i$, we choose it to have size at most $r_i$ and at least $\min\{r_i,s\}$.  With this $\cH_i$ will be uniform if $r_i\le s$, and otherwise when we apply the non-uniform version of Lemma~\ref{lem:spreadMain} our error term will have an extra factor of $r_i\le e^{Kr_{i+1}}$, with this inequality holding by our hypothesis for $r_i>s$.  This term will be insignificant compared to $(C/2)^{-r_{i+1}/2}$ provided $C$ is large in terms of $K$.  
	
	If $r_\ell \le K' \log(\ell+1)$ for some large $K'$ depending on $K$, then as in the proof of Theorem~\ref{thm:spreadMain} we can assume $r_I=K'\log(\ell+1)$ for some $I$ and conclude the result as before.  Otherwise $r_\ell\le s$ by hypothesis, so $\cH_\ell$ will be uniform and we can apply Lemma~\ref{lem:uniform} to conclude the result.
\end{proof}

Another extension can be made by not requiring the same ``level of spreadness'' throughout $\cH$.
\begin{defn}
	Let $0<q_1,\ldots,q_{\ell-1}\le 1$ be real numbers and $r_1>\cdots >r_\ell$ positive integers. We say that a hypergraph $\cH$ on $V$ is \textit{$(q_1,\ldots,q_{\ell-1};r_1,\ldots,r_\ell)$-spread} if $\cH$ is non-empty, $r_1$-bounded, and if for all $A\sub V$ with $d(A)>0$ and $r_{i}\ge|A|\ge r_{i+1}$ for some $1\le i< \ell$, we have for all $j\ge r_{i+1}$ that
	\[M_j(A):=|\{S\in \cH:|A\cap S|\ge j\}|\le q_i^j |\cH|.\]
\end{defn}
\iffalse 
\begin{defn}
	Let $0<q_1,\ldots,q_{\ell-1}\le 1$ be real numbers and $r_1>\cdots >r_\ell$ positive integers. We say that a hypergraph $\cH$ on $V$ is \textit{$(q_1,\ldots,q_{\ell-1};r_1,\ldots,r_\ell)$-spread} if $\cH$ satisfies the same conditions as in Definition~\ref{def:2} except $q^j$ in \eqref{eq:q} is replaced by $q_i^j$.
\end{defn}
\fi

Different levels of spread was also considered in \cite{Sunflower}.  Here one can prove the following.

\begin{thm}\label{thm:levels}
	Let $\cH$ be a $(q_1,\ldots,q_{\ell};r_1,\ldots,r_\ell,1)$-spread hypergraph on $V$ and define $s=\min_{S\in \cH}|S|$.  Assume that there exists a $K$ such that for all $i$ we have $r_i\le K q_i|V|$, and that for all $i$ with $r_{i}>s$ we have $\log r_i\le K r_{i+1}$.  Then there exists a constant $K_0$ depending only on $K$ such that if $r_\ell\le \max\{s,K_0\log(\ell+1)\}$ and if $C\ge K_0$, then a set $W$ of size $C\sum q_i|V|$ chosen uniformly at random from $V$ satisfies
	\[\Pr[W\tr{ contains an edge of }\cH]\ge 1-\f{K_0 \log (\ell+1)}{CL},\]
	where $L:=\sum_i q_i/\max_i q_i$.
\end{thm}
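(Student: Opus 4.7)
The plan is to adapt the proof of Theorem~\ref{thm:spreadMainTech} to handle the varying spread parameters $q_i$ at each level. Given $\cH$ which is $(q_1,\ldots,q_\ell;r_1,\ldots,r_\ell,1)$-spread, I would choose random sets $W_1,\ldots,W_{\ell-1},W'$ of sizes roughly $Cq_i|V|$ and $C\sum q_i|V|$ respectively, so that the total $|W|$ is on the order of $C\sum q_i|V|$ as required. The construction of the nested sequence $\cH_1=\cH,\cH_2,\ldots,\cH_\ell$ by discarding $r_{i+1}$-bad pairs at each step, and the analog of Claim~\ref{cl:phi}, are unchanged.

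For the analog of Claim~\ref{cl:spread}, I would show that if each $W_j$ with $j<i$ is successful (meaning $|\cH_{j+1}|\ge(1-\f{1}{2\ell})|\cH_j|$), then $\cH_i$ is $(2q_i,\ldots,2q_\ell;r_i,\ldots,r_\ell,1)$-spread. Since $|\cH_i|\ge|\cH|/2$ and the spread condition at level $i'$ uses $q_{i'}$, this follows from the same argument as in the original, with the spread parameter doubled at each level. Then the analog of Claim~\ref{cl:success} uses Lemma~\ref{lem:spreadMain} at level $i$ with the single parameter $2q_i$ (the lemma is stated for one $q$, so one simply invokes it with the relevant level's parameter) to bound the failure probability at step $i$ by $6\ell(C/4)^{-r_{i+1}/2}$, summing to at most $6\ell^2(C/4)^{-r_\ell/2}$ over all iterative steps. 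The final step applies Lemma~\ref{lem:uniform} to $\cH_\ell$ (which is $r_\ell$-uniform and $(2q_\ell)$-spread) with $W'$ of size $C\sum q_i|V|$, giving failure probability at most $\f{8q_\ell}{C\sum q_i}$ plus a negligible exponential term; using $q_\ell\le\max q_i$ and the definition of $L$, this is at most $\f{8}{CL}$.

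The remaining case work mirrors the proof of Theorem~\ref{thm:spreadMain}. If $r_\ell<K'\log(\ell+1)<r_1$ for a suitable constant $K'$, one inserts a new level $r'_I=K'\log(\ell+1)$ between $r_{i-1}$ and $r_i$ with spread parameter $q_{i-1}$ inherited from the adjacent level above; the inserted spread condition follows from the original condition at level $i-1$, and the total $W$-budget only grows by a constant factor (since the added mass $Cq_{i-1}|V|$ is already comparable to a term in $C\sum q_i|V|$). The case $r_1<K'\log(\ell+1)$ is handled directly by Lemma~\ref{lem:small}. Combining all the failure probabilities yields the total bound $K_0\log(\ell+1)/(CL)$. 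The main obstacle I anticipate is verifying that the $\log(\ell+1)$ slack correctly absorbs the $6\ell^2(C/4)^{-r_\ell/2}$ term after inserting the level $r'_I$: this requires $(C/4)^{r_\ell/2}\ge c\ell^2 L/\log(\ell+1)$ for some absolute constant $c$, which holds for sufficiently large $K_0$ since $L\le\ell$.
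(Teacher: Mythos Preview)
Your outline is correct and matches the paper's own sketch: take $W_i$ of size $Cq_i|V|$ and $W'$ of size $C\sum q_i|V|$, show each successful $\cH_i$ inherits the appropriate spread (the paper phrases this coarsely as ``$(2\max_i q_i)$-spread'', your level-by-level $(2q_i,\ldots,2q_\ell)$ is a harmless refinement), then finish with Lemma~\ref{lem:uniform} on $\cH_\ell$ or Lemma~\ref{lem:small} at an inserted level $r'_I=K'\log(\ell+1)$.

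Two small points to tighten. First, Theorem~\ref{thm:levels} is stated for non-uniform $\cH$, so you must also fold in the adaptations from the sketch of Theorem~\ref{thm:nonUniform}: the bounded (rather than uniform) version of Lemma~\ref{lem:spreadMain} picks up an extra factor of $r_i$, which is exactly what the hypothesis $\log r_i\le K r_{i+1}$ is there to absorb, and one needs $r_\ell\le s$ (the other branch of the hypothesis $r_\ell\le\max\{s,K_0\log(\ell+1)\}$) to guarantee $\cH_\ell$ is actually uniform before invoking Lemma~\ref{lem:uniform}. Second, in your final paragraph the quantity to control is $6\ell^2(C/4)^{-r'_I/2}$ with $r'_I=K'\log(\ell+1)$, not $r_\ell$; once you stop at level $I$ the remaining $r_j$'s play no role, and the Lemma~\ref{lem:small} contribution $2e^{-CL/(4K'\log(\ell+1))}\le 8K'\log(\ell+1)/(CL)$ is precisely where the $\log(\ell+1)$ in the final bound comes from.
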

Note that $\sum q_i\le \ell \max q_i$, so we have $L\le \ell$ with equality if $q_i=q_j$ for all $i,j$.

\begin{proof}[Sketch of Proof]
	We now choose our random sets $W_i$ to have sizes $Cq_i|V|$ and $W'$ to have size $C\sum q_i |V|=C(L\cdot \max q_i)|V|$.   With this any of the $\cH_i$ could be at worst $(2\max q_i)$-spread if each $\cH_{i'}$ was successful, so in this case when we apply Lemma~\ref{lem:small} with $W'$ we end up getting a probability of roughly $1-e^{-CL/r_i}$ of containing an edge.  From  this quantity we should subtract roughly $\ell^2 C^{-r_i}$, since this is the probability that some $\cH_{i'}$ is unsuccessful.  If $r_i=K'\log(\ell+1)$ for some large constant $K'$ then this gives the desired bound.  Otherwise we can basically assume $r_\ell> K'\log(\ell+1)$ and apply Lemma~\ref{lem:uniform} to $\cH_\ell$ to get a probability of roughly $1-(CL)^{-1}$, which also gives the result after subtracting $\ell^2 C^{-r_\ell}$ to account for some $\cH_{i'}$ being unsuccessful.
\end{proof}

Recently Frieze and Marbach~\cite{FriezeMarbach} developed a variant of Theorem~\ref{thm:Fractional} for rainbow structures in hypergraphs.  We suspect that straightforward generalizations of our proofs and those of \cite{FriezeMarbach} should give an analog of Theorem~\ref{thm:spreadMain} (as well as Theorems~\ref{thm:nonUniform} and \ref{thm:levels}) for the rainbow setting.

\textbf{Acknowledgments:} We thank Bhargav Narayanan for looking over an earlier draft.

\bibliographystyle{abbrv}
\bibliography{Spread}
	
\end{document}